\newcommand{\beq}[1]{\begin{equation}\label{#1}}
\newcommand{\eeq}{\end{equation}}
\newcommand{\beqa}{\begin{eqnarray*}}
\newcommand{\eeqa}{\end{eqnarray*}}
\newcommand{\subs}{\subseteq}
\newcommand{\lf}{\lfloor}
\newcommand{\rf}{\rfloor}
\newcommand{\lc}{\lceil}
\newcommand{\rc}{\rceil}
\renewcommand{\le}{\leqslant}
\renewcommand{\ge}{\geqslant}
\newtheorem{theorem}{Theorem}
\newtheorem{lemma}[theorem]{Lemma}
\newtheorem{cor}[theorem]{Corollary}
\def\N{\mathbb N}
\def\kmin{k_{\textrm{min}}}
\author{
Ma\l gorzata Bednarska-Bzd\k{e}ga
    \thanks{Faculty of Mathematics and Computer Science, Adam Mickiewicz University in Pozna\'n, Poland. Email: {\tt mbed@amu.edu.pl}.} \ \ \
Michael Krivelevich
    \thanks{School of Mathematical Sciences, Raymond and Beverly Sackler Faculty of Exact Sciences, Tel Aviv University, Tel Aviv 6997801, Israel. Email: {\tt krivelev@post.tau.ac.il}. Supported in part by USA-Israel BSF grant 2014361.}
\and Viola M\'esz\'aros
    \thanks{Bolyai Int\'ezet, Aradi v\'ertan\'uk tere 1, Szeged 6720, Hungary. Email: {\tt viola@math.u-szeged.hu}. Supported by Swiss National Science Foundation grants 200020-162884 and 200020-144531.} \ \ \
Cl\'ement Requil\'e
    \thanks{Institute for Algebra, Johannes Kepler Universit\"at Linz, Austria. Email: {\tt clement.requile@jku.at}. Supported by the Austrian Science Fund (FWF) grant F5004. 
    The work presented in this paper was carried out while the author was affiliated with the Institut f\"ur Mathematik und Informatik, Freie Universit\"at Berlin, and Berlin Mathematical School, Germany, and supported by the FP7-PEOPLE-2013-CIG project CountGraph (ref. 630749).}
}
\begin{document}

\title{Proper colouring Painter-Builder game}

\maketitle

\begin{abstract}
    We consider the following two-player game, parametrised by positive integers $n$ and $k$. 
    The game is played between Painter and Builder, alternately taking turns, with Painter moving first. 
    The game starts with the empty graph on $n$ vertices. 
    In each round Painter colours a vertex of her choice by one of the $k$ colours and Builder adds an edge between two previously unconnected vertices. 
    Both players must adhere to the restriction that the game graph is properly $k$-coloured. 
    The game ends if either all $n$ vertices have been coloured, or Painter has no legal move. 
    In the former case, Painter wins the game, in the latter one Builder is the winner. 
    We prove that the minimal number of colours $k=k(n)$ allowing Painter's win is of logarithmic order in the number of vertices $n$. 
    Biased versions of the game are also considered.
\end{abstract}

\newpage


\section{Introduction}

We study the following perfect information game.
Let $n\ge 2$ and $k$ be natural numbers. 
The {\sl Painter-Builder $k$-colouring game} on $[n]=\{1,2,\ldots,n\}$ is played between Painter (who starts the game) and Builder. 
The players take turns to create/grow a graph $G$, the {\em game graph}, with a $k$-colouring of its vertices in the following way. 
The vertex set of $G$ is $[n]$ and at the beginning there are no edges.
In every round Painter colours a previously uncoloured vertex with one of $k$ colours. 
Then Builder adds to $G$ an edge between two previously unconnected vertices.
Throughout the game, both players must adhere to the restriction that the game graph is properly $k$-coloured.
The game ends if either all vertices are coloured or Painter has no legal move. 
In the first case, Painter wins the game (and the game lasts $n$ rounds), and in the latter one Painter is the loser. 
Observe that Painter loses the game if and only if at some point of the game there is an uncoloured vertex $v$, having neighbours in each one of the $k$ colours.

The game belongs to a wide class of Builder-Painter games, in which Builder reveals edges or vertices of a graph (which is the board of the game) and Painter colours edges or vertices of the board. 
The aim of Painter is to avoid building a monochromatic copy of a graph from a given graph family. 
There are different variants of the game, but they are mostly linked to the so called  online size Ramsey number. 
The online size Ramsey number  has been investigated in numerous papers starting with \cite{B83, B93} by Beck. 
Builder selects one edge per turn, then Painter has to colour it with one of $k$ colours. 
Builder is trying to force Painter to complete a monochromatic copy of some fixed graph $H$ in as few rounds as possible on an unbounded vertex set. 
The {\em online size Ramsey number} of $H$ is the minimum number of rounds until Builder achieves his goal assuming both players play optimally. 
Most of the results were obtained for the case of $k=2$ colours. 
In particular, Conlon \cite{C09} proved that for infinitely many values of $l$ the online size Ramsey number of $K_l$ is exponentially smaller than the size Ramsey number of $K_l$. 
Grytczuk, Kierstead and Pra{\l}at~\cite{GKP03} investigated the online size Ramsey number for paths and for some other graphs.
Grytczuk, Ha{\l}uszczak and Kierstead~\cite{GHK04} described yet another variant of the game: Builder selects an edge and Painter colours it but there is a restriction. 
While Builder tries to force Painter to build some fixed monochromatic graph $H$, Builder cannot choose edges arbitrarily. 
He has to keep the constructed graph in a prescribed class of finite graphs $\cal G$ throughout the game. 
The task is to determine the winner for a given pair $H, {\cal G}$. 
They showed  that for the case of two colours, Builder has a winning strategy for any $k$-colourable graph $H$ in the class of $k$-colourable graphs.
Belfrage, M\"utze and Sp\"ohel \cite{BMS11} used Builder-Painter games as a tool for studying a probabilistic one-player online Ramsey game. 
In their version, Builder reveals one edge per turn and he has to abide by the restriction that the edge density of the already revealed graph must not exceed a given bound $d$.

Let us stress that in all above versions of the Builder-Painter game, Painter colours edges and not vertices.
However, the vertex colouring games have also been studied. 
Motivated by a problem of an online colouring random process, M{\"u}tze, Rast and Sp{\"o}hel \cite{MRS12} and M{\"u}tze and Sp{\"o}hel  \cite{MS11}  considered the following game.
Given $k\in \N$, $d>0$, a graph $H$ and an infinite set of vertices, in each round Builder draws edges connecting a previously unselected vertex $v$ to some of the previously selected vertices, and Painter has to respond by colouring $v$ with one of $k$ colours. 
Builder is not allowed to draw an edge that would create a subgraph with edge density greater than $d$. 
Builder's aim is to force Painter to create a monochromatic copy of $H$.

The above  Builder-Painter game has a different focus from the $k$-colouring game in the present paper.
One important characteristic of our game is that it only considers proper colourings.
Moreover, subject to the constraint that the graph be properly colored, Builder can draw any edge and Painter can colour any uncoloured vertex, even one not incident to any edge selected by Builder. 
The constraint of keeping the edge density low is voided, however, we will consider a version of the Painter-Builder $k$-colouring game with the constraint of keeping the graph created by Builder 2-colourable. 
We will show that, somewhat surprisingly, even then Builder can force Painter to use many colours. 
More precisely, Painter needs $\Theta(\log n)$ colours to colour all $n$ vertices properly.

Let $\kmin(n)$ denote the minimum number of colours such that Painter has a winning strategy in the $k$-colouring game on $[n]$. 
Clearly $\kmin(n)\le n$ since Painter trivially wins the $n$-colouring game on $[n]$.

Our main result asserts that $\kmin(n)=\Theta(\log n)$.

\begin{theorem}\label{logcol}
    For every $n>10^8$,
    $$
        0.01\log_2 n < k_{\min}(n)\le \log_2 n+1.
    $$
    Moreover, if $k\le 0.01\log_2 n$, then Builder has a strategy to win the $k$-colouring game on $[n]$, while keeping the game graph bipartite all game long.
\end{theorem}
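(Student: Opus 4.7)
The plan is to handle the two bounds separately. For the upper bound, Painter will be given a strategy with $k=\lceil\log_2 n\rceil+1$ colours. Write $d(v):=|S(v)|$ for the number of distinct colours appearing in the coloured neighbourhood of an uncoloured vertex $v$, where $S(v)$ is the set of ``forbidden'' colours at $v$. Consider the potential $\Phi=\sum_{v\text{ uncoloured}}2^{d(v)}$, which initially equals $n\le 2^{k-1}$. Painter's rule on each of her turns is to pick an uncoloured $v^*$ maximising $d(v^*)$, and then to choose a colour $c^*\in\{1,\dots,k\}\setminus S(v^*)$ greedily so as to minimise the number of uncoloured neighbours $u$ of $v^*$ for which $c^*\notin S(u)$. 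One then checks that each round (Painter followed by Builder) keeps $\Phi<2^k$, which in particular forces $d(v)<k$ for every uncoloured $v$ and guarantees Painter a legal move. The structural reason this invariant is sustainable is the sparsity of the game graph: Builder adds at most $n-1$ edges in total, so he cannot concentrate too many colours on any one vertex.

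For the lower bound, Builder will be given a strategy that uses at most $k\le 0.01\log_2 n$ colours and keeps the game graph bipartite. Fix at the outset a bipartition $V=A\sqcup B$ with $|A|=|B|=n/2$ and restrict Builder to edges lying in $A\times B$; this immediately yields bipartiteness. Builder's strategy is phase-based: in phase $i\in\{1,\dots,k\}$ he maintains a set $T_i\subseteq B$ of uncoloured ``target'' vertices, each already having at least $i$ distinct colours in its neighbourhood inside $A$. During phase $i$ he uses his edges to route Painter's freshly coloured $A$-vertices to chosen targets, upgrading each target's threat level from $i-1$ to $i$. Painter's counter-moves can neutralise individual targets (by colouring them) or deliberately use colours in an unhelpful distribution, but by starting each phase with a large reserve of targets the shrinkage $|T_{i-1}|\to|T_i|$ can be bounded by a fixed constant factor $C$. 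Choosing $C$ of order $2^{100}$ (arising from the combinatorial budget in a single phase) gives $|T_k|\ge n/(2C^k)\ge 1$ provided $k\le 0.01\log_2 n$ and $n>10^8$; at that point some uncoloured $v\in T_k$ has $d(v)=k$ and Painter has no legal move.

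The main difficulty is the lower bound's phase-transition bookkeeping: Painter is adaptive and her colour choices can directly interfere with Builder's plan, so one must carefully budget enough reserve targets at the start of each phase to absorb this. This is what forces the constants $0.01$ and $10^8$ in the statement. The upper bound, by contrast, reduces to a direct potential-function inequality once Painter's greedy rule is specified, and the bipartiteness addendum in the ``moreover'' clause is automatic from Builder's restriction to cross-partition edges.
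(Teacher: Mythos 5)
Your proposal diverges from the paper's proof in both directions, and in both directions the key step is asserted rather than established.

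\textbf{Upper bound.} The paper proves the upper bound with a simple \emph{randomised} Painter strategy: when Builder plays an edge $uv$ with both endpoints uncoloured, Painter colours one of the two chosen uniformly at random (colouring the unique uncoloured endpoint, or an arbitrary vertex, in the other cases), always using the least available colour. The event $A_v$ that $v$ is still uncoloured when its degree reaches $\lfloor\log_2 n\rfloor+1$ has probability $<1/n$, a union bound gives $\mathbb{P}(\bigcup A_v)<1$, and since the game is a finite perfect-information game with no chance moves, Builder having no strategy that wins against the random Painter implies Painter has a (deterministic) winning strategy. Your deterministic potential $\Phi=\sum_{v\text{ uncoloured}}2^{d(v)}$ with the claimed invariant $\Phi<2^k$ is not checked, and it is not clear it can be maintained: when Painter colours the max-$d$ vertex $v^*$ with colour $c^*$, the potential drops by $2^{d(v^*)}$ but also gains $\sum 2^{d(u)}$ over the uncoloured neighbours $u$ of $v^*$ with $c^*\notin S(u)$, and Builder can engineer $v^*$ to have many uncoloured neighbours all avoiding every colour available to $v^*$, in which case the gain swamps the loss. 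The greedy colour choice and the ``sparsity'' remark do not rule this out. As stated, ``one then checks'' is precisely the missing argument. (Also a minor point: you use $\lceil\log_2 n\rceil+1$ colours, which exceeds the claimed bound $\log_2 n+1$ when $\log_2 n\notin\mathbb{Z}$; the paper uses $\lfloor\log_2 n\rfloor+1$.)

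\textbf{Lower bound.} The paper's proof hinges on two ideas you do not have. First, a \emph{waiting-room} $(A,B)$: a perfect matching between a monochromatic independent set $A$ and an uncoloured independent set $B$ with $|A|\cdot|B|>n$, built in the first $\approx0.2n$ rounds. Its role is to give Builder a permanent supply of ``throwaway'' legal moves between $A$ and $B$, so he can wait for Painter to colour vertices in his target region without wasting or damaging his structure. Second, the amplification step (Lemma~\ref{excludecolour}): among uncoloured vertices with a strict $t$-colour-neighbourhood, Builder matches pairs in the \emph{same $t$-colour-class}. Whichever endpoint Painter colours, her colour is new for the other endpoint, upgrading it to a $(t+1)$-colour-neighbourhood; the two cases of the lemma handle whether the bulk of these matched pairs lie in one class or in several. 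Your sketch instead says Builder ``routes Painter's freshly coloured $A$-vertices to chosen targets,'' but this presupposes that Painter colours $A$-vertices and that those colours are new at the targets; an adaptive Painter can refuse to do either (e.g.\ by colouring vertices of $B$, or by reusing the same colour across $A$), and your budget-per-phase bound $C$ is not derived from any concrete subroutine. The fixed a priori bipartition $A\sqcup B$ also differs from the paper, where the bipartite structure emerges from the strategy (matchings and the waiting room) and is maintained as an invariant. In short, the lower bound's central mechanism — forcing new colours into target neighbourhoods against an adversarial Painter — is precisely what is missing from the proposal, and it is where all the work in Section~\ref{sec:build} lies.
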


Let us emphasise that we made no effort to optimise the constants in the theorem.
The proof of the theorem will be discussed in the following two sections. 
We present a random argument for the existence of a winning strategy for Painter in Section \ref{sec:paint}, and we exhibit a winning strategy for Builder in Section \ref{sec:build}. 
Section \ref{sec:bias} is devoted to the biased version of the Painter-Builder $k$-colouring game, where Builder claims $b\ge 1$ edges each time.
At every moment of the game by the \emph{game graph} we mean the graph with the vertex set $[n]$ and the edge set consisting of all edges drawn by Builder so far.

\section{Painter's strategy}\label{sec:paint}

In this section, we describe a greedy-random strategy for Painter for the $(\lf\log_2 n\rf +1)$-colouring game on $[n]$.
We prove that with this strategy she wins with positive probability against any strategy of Builder.
Suppose the colours are the integers $1, 2, 3,\dots, \lf\log_2 n\rf +1$. The strategy of Painter is as follows.

She starts by colouring a vertex with colour 1. Then, every time Builder adds a new edge $e=uv$ to the graph, Painter responds by selecting a vertex to colour in the following way:

\begin{itemize}
    \item If both endpoints $u,v$ are uncoloured, then Painter chooses one of $u,v$ at random, with probability $1/2$.
    \item If exactly one of $u,v$  is uncoloured, then Painter selects it.
    \item If both endpoints are coloured, then Painter chooses an arbitrary uncoloured vertex.
\end{itemize}

After having chosen which vertex to colour, Painter colours it with the smallest available colour (that is with the smallest colour currently not assigned to its neighbours). 
If there is no such colour among all colours available for the game, then Painter forfeits the game.

To analyse this strategy, let us denote by $A_v$ the (random) event that at some point in the game, a vertex $v\in [n]$ is still uncoloured, and has degree $\lf\log_2 n\rf + 1$. The probability of such an event is  bounded as follows:
$$
    {\mathbb P}(A_v) \leq \left( \frac{1}{2} \right)^{\lf \log_2n\rf+1}<\frac{1}{n}.
$$
Indeed, for $v$ to remain uncoloured with its degree already that large, every time the edge connecting $v$ to a vertex $w$ of its neighbourhood is added by Builder, Painter selected $w$ and he did so with probability at most $1/2$, independently of previous choices.

Observe that if the event $A_v$ did not happen during the game, then $v$ was chosen for colouring  when at most $\lf\log_2n\rf$ of its neighbours were coloured, thus there was a colour available for $v$ among the first $\lf\log_2n\rf+1$ colours. 
Hence Builder can win the game only if at least one such event happens. 
The probability that Builder wins the game is then at most ${\mathbb P}(\cup_{v\in [n]} A_v)<1$. 
Since the game is a perfect information game with no chance moves, this means that Builder has no winning strategy, thus indicating Painter's win. 
Hereby we obtain the upper bound of Theorem~\ref{logcol}.

\section{Builder's strategy}\label{sec:build}

We say that  there is {\sl a waiting-room $(A,B)$} in the $k$-colouring game on $[n]$ if:
\begin{itemize}[label=-]
    \item $A,B\subs V(G)$ are two independent sets,
    
    \item $|A|=|B|\ge 0.1n/k$,
    
    
    \item $G[A \cup B]$ is a perfect matching,
    
    \item all vertices in $A$ are coloured with the same colour.
\end{itemize}
If there is a waiting-room $(A,B)$, we say that Builder {\sl adds an edge to the waiting-room} if he claims a pair $ab$ with $a\in A$ and $b\in B$.
Observe that the game lasts at most $n$ rounds.
Furthermore, Painter cannot colour any vertex in $B$ with the colour of vertices in $A$.
Hence, every edge added between $A$ and $B$ will be properly coloured.
Therefore, if $|A|\cdot |B| > n$, then Builder will be able to add an edge to the waiting-room in every round, until the game ends. 
This observation will prove useful later, when we say that Builder can just wait in the waiting-room until enough vertices in a target set $C$ disjoint from $A\cup B$ have been coloured by Painter.
We would like to stress that the vertices of the waiting-room will not be used to enforce Painter to use many colours, but this structure will only be utilised to force Painter to colour many vertices outside of it.

First, we prove that Builder can create a waiting room quickly.

\begin{lemma}\label{waitingroom}
    Suppose that $n > 50$ and $k\ge 2$.
    Builder has a strategy ensuring that, after at most  $0.2n$ rounds of the $k$-colouring game on $[n]$, the game graph $G$ contains a waiting-room $(A,B)$ and $G$ is a union of disjoint paths of length at most two.
\end{lemma}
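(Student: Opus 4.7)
The plan is to have Builder greedily build a matching in which every coloured vertex ends up as one of the matching's endpoints; a waiting-room will then fall out of the pigeonhole principle applied to the colours used on these endpoints.

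Concretely, the strategy I prescribe for Builder is as follows. Right after Painter colours a vertex $v$, Builder's response depends on whether $v$ was isolated in the game graph at that moment. If it was, Builder picks any isolated uncoloured vertex $w\ne v$ and plays the edge $vw$; otherwise $v$ is already a matching endpoint, and Builder plays an edge $v'w'$ between two fresh isolated uncoloured vertices. A straightforward induction on the round number shows that this rule maintains the invariant that the game graph is a matching and every coloured vertex is one of its endpoints. As a bonus, under this invariant any vertex Painter is about to colour has at most one coloured neighbour, so she has at least $k-1\ge 1$ legal colours and cannot forfeit during the rounds we care about.

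Next I would verify that Builder can execute this rule for $0.2n$ rounds. The invariant implies that after $t-1$ complete rounds there are exactly $2(t-1)$ matching endpoints, so at least $n-2t+1\ge 0.6n>2$ isolated uncoloured vertices remain after Painter's move in round $t\le 0.2n$ (here I use $n>50$), which is enough for Builder to find the one or two fresh vertices he needs. Once $\lf 0.2n\rf$ rounds have been played, the game graph is a matching $M$ on $\lf 0.2n\rf$ edges and all $\lf 0.2n\rf$ coloured vertices lie on $M$.

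To extract the waiting-room, I pick a colour $c$ used by at least $\lf 0.2n\rf/k\ge 0.1n/k$ of the coloured endpoints (pigeonhole) and let $A$ be this monochromatic class. Proper colouring forbids any edge of $M$ from having both endpoints of colour $c$, so the vertices of $A$ sit in $|A|$ distinct edges of $M$; letting $B$ be the set of their $M$-partners then yields the waiting-room, since $A,B$ are independent, $G[A\cup B]$ is the perfect matching formed by these $|A|$ edges, $|A|=|B|\ge 0.1n/k$, and $A$ is monochromatic. The game graph being a matching is a fortiori a union of disjoint paths of length at most two.

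The main obstacle the strategy has to overcome is that Painter could try to ``waste'' her turns on isolated vertices detached from the matching, since such coloured-but-isolated vertices would contribute nothing to a matching-based waiting-room. Builder's rule of pairing every freshly coloured isolated vertex with a fresh uncoloured partner \emph{before} ever making a free move is exactly what forestalls this; once this is enforced, the proper-colouring constraint does the remaining work, by ensuring that every colour class meets each matching edge in at most one vertex.
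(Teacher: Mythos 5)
Your proof is correct, but it follows a genuinely different strategy than the paper's. The paper uses a two-phase plan: in the first $\lceil 0.1n\rceil$ rounds Builder builds a matching, colour-blind; then, by pigeonhole, it fixes a monochromatic set $A$ of size $\lceil 0.1n/k\rceil$ among the coloured vertices (wherever they may be), and spends another $\lceil 0.1n/k\rceil$ rounds attaching to each $v_i\in A$ a fresh pendant vertex $u_i$, which forms $B$. This leaves some vertices of degree $2$, hence the ``paths of length at most two'' conclusion. Your Builder instead reacts to Painter in a single phase, maintaining the invariant that the game graph is a matching whose endpoints include every coloured vertex; the waiting-room is then extracted with no extra moves, because proper colouring forbids a monochromatic matching edge, so the $M$-partners of the largest colour class already form the required $B$. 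Your version buys a cleaner invariant (the game graph stays a matching, strictly stronger than paths of length two) and needs no second construction phase; the paper's version is colour-blind in phase one, which makes the bookkeeping marginally lighter but costs it the extra pendant edges. Both comfortably meet the $0.2n$ budget, and your round-count and pigeonhole estimates check out for $n>50$ (in fact $n\ge 10$ suffices for the $\lfloor 0.2n\rfloor/k\ge 0.1n/k$ step).
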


\begin{proof}
    In the first $\lc 0.1n\rc$ rounds Builder creates a matching. 
    After that and since there are at most $k$ colours present on the graph, there is a set $A=\{v_1,\ldots,v_t\}$ of vertices which have exactly the same colour, for $t=\lc 0.1n/k\rc\le \lc 0.05n\rc$. Now, for $i=1,\ldots,t$, and in every one of the next $t$ rounds, Builder selects a non-coloured and isolated vertex $u_i$ and claims the pair $v_iu_i$ (see Figure \ref{fig:waiting-room}).
    \begin{figure}
    \centering
            \includegraphics[scale=1]{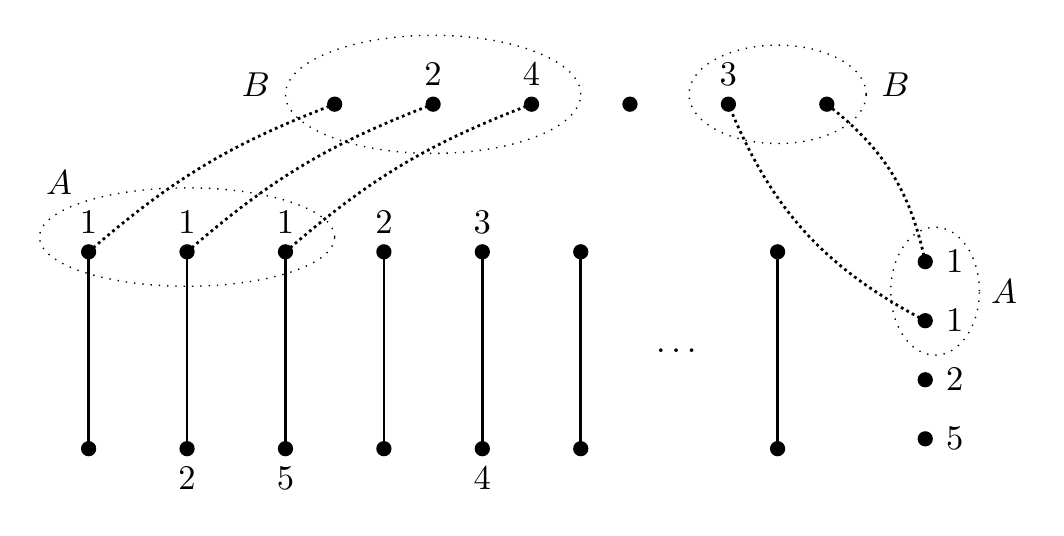}
    \caption{{\sl Waiting-room} $(A,B)$. First Builder creates a matching (thick edges), then connects (via the densely dotted edges) each vertex of $A$, the set of vertices with colour 1, to a different uncoloured and isolated vertex which will be added to the set $B$.}
    \label{fig:waiting-room}
    \end{figure}
    Let us stress that for $n>50$ such an isolated vertex always exists since $3(\lc 0.1n\rc +t)<n$. It is not hard to verify that $(A,B)$, with  $B=\{u_1,\ldots,u_t\}$, forms a waiting-room. 
    Furthermore the graph of the game consists of disjoint paths of length at most 2, and it took Builder $\lc 0.1n\rc +t$ rounds to create them.
    Thus, for $n>50$, the number of rounds is less than $0.2n$.
\end{proof}

Given $t\ge 0$, we say that a vertex has a {\em $t$-colour-neighbourhood} if at least $t$ of its neighbours have pairwise distinct colours. 
We say that the vertices $u,u'$ are in the same {\em $t$-colour-class} if they have $t$-colour-neighbourhoods with exactly the same set of colours. 
Finally, we say that a vertex has a {\em strict $t$-colour-neighbourhood} if it has a $t$-colour-neighbourhood but no $(t+1)$-colour-neighbourhood.

\begin{lemma}\label{excludecolour}
    Suppose that $0\le t<k$ and that after some rounds of the game there is a waiting-room $(A,B)$ and a set $V_t$ of uncoloured vertices, each having a $t$-colour-neighbourhood, with $|V_t|>2\binom kt$ and $|V_t|>1000$.
    Furthermore, assume that in the game graph $G$ the component of every vertex of $V_t$ is a tree and that no two vertices of $V_t$ are in the same component; in addition, if $G$ contains a cycle, then all of its edges are between $A$ and $B$.

    Then Builder has a strategy ensuring that after some additional rounds the following holds.
    \begin{itemize}[label=-]
        \item There is a set $V_{t+1}$ of uncoloured vertices with $|V_{t+1}|>0.001 |V_t|$,  each having a $(t+1)$-colour-neighbourhood.
        \item No two vertices of $V_{t+1}$ are in the same component.
        \item The component of every vertex from $V_{t+1}$ is a tree.
        \item If there is a cycle in the game graph, then all of its edges are between $A$ and $B$.
    \end{itemize}
\end{lemma}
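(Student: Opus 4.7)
My plan is to split $V_t$ into its colour-classes and use a pair-based reduction. Specifically, partition $V_t$ by the set $S\subs\{1,\ldots,k\}$ of $t$ colours appearing in each vertex's neighbourhood; this yields at most $\binom{k}{t}$ classes. Since $|V_t|>2\binom{k}{t}$, pairing vertices within each class produces a matching $M$ on $V_t$ with $|M|\ge\frac{|V_t|-\binom{k}{t}}{2}\ge\frac{|V_t|}{4}$ pair-edges $(u,u')$, each pair sharing a common $t$-element class $S$.

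Builder's idea is to play these pair-edges over the forthcoming rounds, taking advantage of the waiting-room whenever stalling is required. The key observation is: once the edge $uu'$ is present and the first of $u,u'$ gets coloured (say $u$, with some colour $c$), proper colouring forces $c\notin S$, so the partner $u'$ instantly sees a new colour in its neighbourhood and enters $V_{t+1}$ for as long as it remains uncoloured. The structural hypotheses are preserved under this scheme: merging two disjoint $V_t$-tree components via a pair-edge yields a tree, no new cycle arises (because the two components were disjoint, using the hypothesis that no two $V_t$ vertices share a component), and by designating at most one $V_{t+1}$ representative per merged component we preserve the "no two $V_{t+1}$ in the same component" property; in particular, cycles remain confined to $A\cup B$.

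The main obstacle is to actually realise $|V_{t+1}|\ge 0.001|V_t|$ simultaneously rather than just transiently. A naive pair-by-pair schedule lets an adversarial Painter un-ripen each ripened pair on her very next move, keeping the ripe count bounded by one at every snapshot. To overcome this, Builder will interleave pair-edge additions with edges from freshly ripe vertices to the waiting-room set $A$ (of common colour $c_A$): each such auxiliary edge grows the colour-neighbourhood of the ripe vertex and eventually either (i) saturates its colour options so that Painter cannot colour it at all, or (ii) forces Painter, when she does colour it, to use yet another previously unused colour, thereby triggering further $(t+1)$-colour-neighbourhoods in the component via the pair-edge back to the partner. A careful amortised accounting of these cascading forced colour introductions, combined with the fact that the waiting-room allows Builder to stall until Painter has exhausted her other uncoloured options, should yield a linear supply of uncoloured $V_{t+1}$ vertices at an appropriately chosen stopping round; the explicit constant $0.001$ emerges from the specific interleaving schedule and the size relations $|V_t|>1000$ and $|A|,|B|\ge 0.1n/k$.
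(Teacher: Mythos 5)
Your proposal begins along the same lines as the paper (pairing vertices within the same $t$-colour-class to form a matching $M$, then observing that colouring one endpoint of a pair forces a colour outside $S$ onto the partner's neighbourhood), and you correctly identify the central difficulty: a pair-by-pair schedule lets Painter colour each newly ripened partner immediately, so at any snapshot the ripe count stays bounded. However, the fix you propose does not work and the remaining reasoning is left as a hand-wave, so there is a genuine gap.

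First, adding edges from a ripened vertex $u'$ to the waiting-room set $A$ gives $u'$ at most one new neighbourhood colour (the common colour $c_A$ of $A$), since $A$ is monochromatic; repeating this contributes nothing further, so there is no mechanism that ``eventually saturates its colour options'' or ``forces Painter to use yet another previously unused colour.'' Second, any edge between a $V_{t+1}$-candidate and $A$ merges that candidate's component with the (possibly cyclic) waiting-room component, violating the invariant that every $V_{t+1}$-vertex lies in a tree component and that cycles stay confined to $A$--$B$. Third, the ``careful amortised accounting'' is not supplied, and it is precisely where the constant must come from. You also do not account for Painter's $s$ moves during the $s$ rounds in which $M$ is built, so the bound $|M|\ge|V_t|/4$ is stated without the correction the paper makes (it derives $|M|\ge (|V'_t|-\binom{k}{t})/3$ after solving $s\ge(|V'_t|-\binom{k}{t}-s)/2$).

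The paper resolves the obstacle differently: the waiting-room is used only as a stalling device (Builder plays legal $A$--$B$ edges, never edges touching $V_t$), so Painter is forced to spend moves somewhere while the pair-edges are already in place. After a fixed fraction of the pair-edges have one coloured endpoint, Builder exploits the newly coloured vertices directly. In Case 1 (many pairs in the same $t$-colour-class) he matches a coloured $u_i$ to an uncoloured $u_j$ of the same class, whose $t$-colour-neighbourhood therefore excludes $u_i$'s colour. In Case 2 (spread-out classes) he reuses a coloured neighbour $w_i$ of $u_i$ whose colour is absent from $u_{m_0+i}$'s neighbourhood, exploiting that distinct strict $t$-colour-classes differ in some colour. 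In each case Painter can colour at most half of the targets during these rounds, leaving a linear fraction uncoloured with $(t+1)$-colour-neighbourhoods; all new edges go between distinct tree components, so the structural invariants are preserved without ever touching $A\cup B$ from $V_t$.
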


\begin{proof}
    Suppose the assumptions of the lemma hold, at some moment of the game, for a set $V_t$ of $n_t$ vertices. 
    If there is a set $V'\subs V_t$ such that $|V'|>0.001n_t$ and every element of $V'$ has a $(t+1)$-colour-neighbourhood, then clearly our assertion follows with $V_{t+1}=V'$. 
    Therefore we assume further that there is no such such set $V'$ and that there is a set $V'_t\subs V_t$ of at least $0.999n_t$ elements, each one of them having a strict $t$-colour-neighbourhood.

    First and for as long as possible, Builder consecutively matches vertices of $V'_t$ of the same $t$-colour-class. 
    Suppose that it takes Builder $s$ rounds to create a matching $M$ in such a way. 
    Given the $s$ moves of Painter and the parity of the size of every $t$-colour-class, we have $s=|M|\ge \big(|V'_t|-\binom kt-s\big)/2$.
    Thus $|M|\ge \big(|V'_t|-\binom kt\big)/3>0.16n_t$, because $|V'_t|\ge 0.999n_t$ and $n_t>2\binom kt$. 
    Clearly at most $s$ vertices covered by $M$ are coloured so there is a set $M'$ of at least $s/2 = |M|/2 > 0.08 n_t$ edges of $M$ such that every edge in $M'$ has at most one endpoint coloured. 
    Now if $j>0.001n_t$ of these edges, say $v_1v'_1,\ldots,v_jv'_j$, have exactly one endpoint coloured, i.e. $v_1,\ldots,v_j$, then we set $V_{t+1}=\{v'_1,\ldots,v'_j\}$. 
    It is then not hard to verify that $|V_{t+1}|>0.001n_t$, that every vertex $v'_1,\ldots,v'_j$ has a $(t+1)$-colour-neighbourhood and that no two of them belong to the same component. 
    Moreover, throughout the considered $s$ rounds, Builder has never connected two vertices from the same component.

    Assume now that at least $|M'|-0.001n_t>0.07n_t$ edges of $M'$ have no coloured endpoint, and denote these edges by $u_1u'_1,\ldots,u_mu'_m$. 
    We have $m>0.07n_t$. 
    We consider two cases.

    \begin{itemize}
        \item {\sl Case 1.} There are at least $m'=\lceil m/3\rceil$ edges among $u_1u'_1, \ldots,u_mu'_m$, denoted by $u_1u'_1,\ldots,u_{m'}u'_{m'}$, such that all their endpoints $u_1$, $u'_1$, $\ldots$, $u_{m'}$, $u'_{m'}$ are in the same $t$-colour-class.
    \end{itemize}

    Now Builder adds edges to the waiting-room until Painter colours at least one endpoint of $\lfloor m'/6\rfloor$ edges from $\{u_1u'_1,\ldots,u_{m'}u'_{m'}\}$. 
    Then the vertices, say, $u_1,\ldots,u_{m''}$ are coloured, with $m''=\lfloor m'/6\rfloor$.
    Observe that $m''> 0.003n_t$, since $n_t>1000$. 
    At this very moment, some vertices $u_i$, with $m''+1\le i\le m'$, may have a $(t+1)$-colour-neighbourhood. 
    Notice that if there are at least $\lfloor (m'-m'')/2\rfloor$ of those vertices $u_i$, then they form the desired set $V_{t+1}$ since $\lfloor (m'-m'')/2\rfloor>m'/3\ge 0.006n_t$. 
    Suppose then that it is not the case, i.e.~at least $(m'-m'')/2>2m''$ vertices, say $u_{m''+1},\ldots,u_{m''+j}$ with $j > 2m''$, still have strict $t$-colour-neighbourhoods. 
    Then every $u_i$, with $1\le i\le m''$, is coloured with a colour which does not appear in the neighbourhood of any vertex in $\{u_{m''+1},u_{m''+2},\ldots, u_{3m''}\}$.

    Then in the consecutive $m''$ rounds, Builder creates a matching between $\{u_1,\ldots,u_{m''}\}$ and
    $\{u_{m''+1},u_{m''+2},\ldots,u_{3m''}\}$. 
    Without loss of generality, we can assume that this matching is $\{u_1u_{m''+1},u_2u_{m''+2},\ldots,u_{m''}u_{2m''}\}$. 
    Now in the meantime, Painter has coloured at most $m''$ vertices in the set \\
    $
        \{u_{m''+1},u_{m''+2},\ldots,u_{2m''}\}\cup \{u'_{m''+1},u'_{m''+2},\ldots,u'_{2m''}\}.
    $
    So for at least $m''/2$ indices $i$ with $m''+1\le i\le 2m''$, we have that  $u_i$ or $u'_i$ is uncoloured. 
    Following this observation, we define the set

    \begin{align*}
        V_{t+1} := & \{u_i\colon m''+1\le i\le 2m'' \text{ and $u_i$ is uncoloured}\}\\
        & \cup \{u'_i\colon m''+1\le i\le 2m'' \text{ and $u_i$ is coloured, while $u_i'$ is uncoloured}\}.
    \end{align*}
    It holds that $|V_{t+1}|\ge m''/2> 0.0015n_t$, and every vertex in $V_{t+1}$ has a $(t+1)$-colour-neighbourhood.
    Furthermore no two vertices of $V_{t+1}$ are in the same component. 
    Observe also that Builder never drew an edge between two vertices of the same component, except for the edges added to the waiting-room. Hence $V_{t+1}$ is the desired set.

    \begin{itemize}
        \item {\sl Case 2.} Every subset of $\{u_1u_1', \ldots, u_mu'_m\}$ with the vertices in the same $t$-colour-class has less than $m/3$ elements.
    \end{itemize}

    Recall that each edge of the matching $M$ connected two vertices with identical $t$-colour-classes. 
    Hence there exists a subset of $m_0$ edges of the set\\
    $\{u_1u'_1, \ldots, u_mu'_m\}$, say $u_1u'_1$, $\ldots$, $u_{m_0}u'_{m_0}$, such that $m/6\le m_0\le m/3$ and everyone of the endpoints $u_1,u'_1$, $\ldots$, $u_{m_0},u'_{m_0}$ has a different $t$-colour-class than the one of each vertex in $\{u_{m_0+1},u_{m_0+2},\ldots,u_{m}\}$.
    Indeed, if there exists a $t$-colour-class $C$ containing between $m/6$ and $m/3$ edges, then the edges in $C$ form the desired set. 
    Otherwise, each $t$-colour-class contains less than $m/6$ edges, and it is easy to see that the union of some $t$-colour-classes has size between $m/6$ and $m/3$, thus forming the desired set of $m_0$ edges.
    Observe now that $m_0> 0.01n_t$ and notice that for every $u_i$ and $u_j$, with $1\le i\le m_0$ and $m_0+1\le j\le m$, $u_i$ has a colour in its neighbourhood which does not appear in the neighbourhood of $u_j$ (and of $u'_j$). 
    This follows from the fact that $u_i$ and $u_j$ have distinct, strict $t$-colour-neighbourhoods.
    
%

    Then in the next $m_0$ consecutive rounds, Builder creates a matching $w_1u_{m_0+1}$, $w_2u_{m_0+2}$,$\ldots$, $w_{m_0}u_{2m_0}$ such that for every $1\le i\le m_0$, the vertex $w_i$ is a coloured neighbour of $u_i$ and the colour of $w_i$ does not appear in the neighbourhood of $u_{m_0+i}$.            
    Since the vertices $u_{1},u_{2},\ldots,u_{m_0}$ have disjoint neighbourhoods, we have $w_i\neq w_j$ for distinct $i,j$. 
    In the meantime, Painter has coloured at most $m_0$ vertices in the set $\{u_{m_0+1},u_{m_0+2},\ldots,u_{2m_0}\}\cup \{u'_{m_0+1},u'_{m_0+2},\ldots,u'_{2m_0}\}$. 
    So for at least $m_0/2$ indices $1\le i\le m_0$, either $u_{m_0+1}$ or $u'_{m_0+1}$ is uncoloured. 
    Let us define the set
    \begin{align*}
        V_{t+1} &:= \{u_{m_0+i}\colon 1\le i\le m_0 \text{ and $u_{m_0+i}$ is uncoloured}\}\\
                & \cup \{u'_{m_0+i}\colon 1\le i\le m_0 \text{ and $u_{m_0+i}$ is coloured, while $u'_{m_0+i}$ is uncoloured}\}.
    \end{align*}
    
    One can easily verify that $|V_{t+1}|\ge m_0/2> 0.005n_t$, that every vertex in $V_{t+1}$ has a $(t+1)$-colour-neighbourhood and no two vertices of $V_{t+1}$ are in the same component. 
    Furthermore, Builder never connected any two vertices of the same component. 
    Hence, again $V_{t+1}$ is the desired set.
\end{proof}

We are now ready for the proof of the lower bound in Theorem~\ref{logcol}.
Let  $n>10^8$ and $k=\lfloor 0.01\log_2 n\rfloor$.
Builder proceeds as  follows. 
For the first $\lf 0.2n\rf$ rounds, Builder plays so that the resulting graph $G$ consists of disjoint paths of length at most 2 and admits a waiting-room $(A,B)$. 
This is possible in view of Lemma \ref{waitingroom}. 
At this very moment, the assumptions of Lemma \ref{excludecolour} hold for $t=0$, where $V_0$ is the set of all uncoloured and isolated vertices of $G$.
Indeed, we have $|V_0|\ge n-3\cdot 0.2n>0.2n>1000$.

Now, based on Lemma \ref{excludecolour}, we infer by an easy induction argument that Builder can play so that for $j=1,\ldots, k$ there is a moment of the game when there is a set $V_j$ of vertices which satisfies all the following conditions.

\begin{itemize}[label=-]
    \item The game graph is bipartite.
    \item All vertices of $V_j$ are uncoloured.
    \item Every vertex of $V_j$ has a $j$-colour-neighbourhood.
    \item $|V_j| > 0.001^j|V_0| > 0.001^k\cdot 0.2n > 2^{k+1} > 2\binom kj$ and $|V_j|>1000$.
\end{itemize}

The last two inequalities hold for $k\le 0.01\log_2 n$ and $n>10^8$.
We conclude that in the set $V_k$ there is an uncoloured vertex with a $k$-colour-neighbourhood, which means that Builder wins the game. 
This implies that $\kmin(n)>0.01\log_2 n$.
As mentioned in the introduction, this result is quite surprising as the game graph remains bipartite throughout the game.

\section{The biased game}\label{sec:bias}

In this section, we consider the {\em biased} version of the $k$-colouring game on $[n]$ between Builder and Painter.
Given positive  integers $p$ and $b$, in each round of the $(p : b)$ Painter-Builder $k$-colouring game on $n$ vertices, Painter colours exactly $p$ vertices and Builder adds exactly $b$ edges to the graph, apart possibly from the very last moves of the players. 
If immediately before Painter's last move there are less than $p$ uncoloured vertices, then she colours as many of them as she can and the game ends. 
If  immediately before Builder's last move there are less than $b$ unselected pairs, then he selects them all and Painter colours as many vertices as possible and the game ends.
Painter wins if and only if at the end of the game all $n$ vertices are (properly) coloured.

Let us first observe that in the $(2 : 1)$ Painter-Builder game two colours suffice for Painter to win, by simply making sure in each round that the last edge inserted by Builder carries both colours. 
Note that in a round when only one endpoint of the inserted edge is uncoloured, Painter can choose and colour arbitrarily an uncoloured, isolated vertex, in addition to colouring properly the uncoloured vertex of the newly added edge.
As after each of Painter's moves all non-isolated vertices are coloured, no vertex has neighbours in both colour classes. 
Hence, two colours guarantee Painter's win.

We then focus on the situation where the bias is in favour of Builder. 
More explicitly, we will consider the $(1 : b)$ Painter-Builder $k$-colouring game. 
Let $\kmin(b,n)$ denote the minimal number of colours such that Painter has a winning strategy in the $(1 : b)$ colouring game on $[n]$. 
Clearly $\kmin(b,n)\le n$ for every $b$.
Before we state and prove the main result of this section, we show that in the biased game Builder can build a big clique.

\begin{lemma}\label{bigclique}
    Define the sequence $(n_i)_{i=0}^{\infty}$ by
    $$
        n_0=n,\quad n_{i+1}=n_i-\left\lceil \frac{n_i-1}{b}\right\rceil-1\,.
    $$
    Let $t=\max\{i:n_i>0\}$ and $0\le j\le t$.

    Then Builder can play so that immediately before some move of Painter there are disjoint vertex sets $K_j$  and $V_j$ such that $K_j$ spans a clique of size $j$ in the game graph $G$, $|V_j|\ge n_j$, no vertices of $V_j$ are coloured, and $uv\in E(G)$ for every $u\in K_j$ and $v\in V_j$.
\end{lemma}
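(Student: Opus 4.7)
The plan is to argue by induction on $j$, maintaining the desired configuration at moments immediately before a Painter move. The base case $j=0$ is immediate: before Painter's first move the game graph is empty, so $K_0 := \emptyset$ and $V_0 := [n]$ work, with $|V_0| = n = n_0$ and the adjacency condition holding vacuously.

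For the inductive step, assume the configuration holds at some $j$ with $j < t$; in particular $n_{j+1} > 0$, which (from the recurrence) forces $n_j \ge 2$. Set $r := \lc (n_j-1)/b \rc$. Builder will play an $r$-round subroutine as follows: after Painter's present move (which colours at most one vertex of $V_j$), Builder picks an uncoloured vertex $v^* \in V_j$ and enumerates the remaining vertices of $V_j$ as $t_1, \dots, t_{n_j-1}$. In each of his next $r$ moves he draws $b$ edges, using the form $v^* t_i$ for the next batch of target indices processed in order (and skipping only those $t_i$ that currently share a colour with $v^*$), with any shortfall made up by arbitrary valid edges disjoint from $K_j \cup V_j$. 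Since $rb \ge n_j - 1$, every target index is handled within the phase.

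Immediately before Painter's next move, set $K_{j+1} := K_j \cup \{v^*\}$ and let $V_{j+1}$ be the uncoloured vertices in $V_j \setm \{v^*\}$. Painter colours exactly $r$ vertices in this phase, at most $r$ of which lie in $V_j \setm \{v^*\}$, so
\[
|V_{j+1}| \ge (n_j - 1) - r = n_j - \lc (n_j-1)/b \rc - 1 = n_{j+1}.
\]
Each $u \in V_{j+1}$ is adjacent to $K_j$ by the inductive hypothesis, so the main---indeed the only---technical obstacle is to verify that $u$ is also adjacent to $v^*$. The skipping rule discards a target $t_i$ only when $t_i$ already shares a colour with $v^*$, which in particular requires $t_i$ to be coloured and hence to lie outside $V_{j+1}$; conversely, every $u \in V_{j+1}$ stays uncoloured throughout the phase, so the edge $v^* u$ is valid at the moment Builder schedules it and is actually drawn. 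Hence $K_{j+1}$ spans a clique of size $j+1$ and every vertex of $V_{j+1}$ is adjacent to all of $K_{j+1}$, closing the induction.
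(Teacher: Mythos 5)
Your proof is correct and follows essentially the same route as the paper's: induction on $j$, with Builder designating one vertex $v^*$ of $V_j$ and spending $\lceil (n_j-1)/b\rceil$ rounds connecting it to the rest of $V_j$, after which the uncoloured neighbours of $v^*$ form $V_{j+1}$. The only cosmetic difference is that the paper's Builder adaptively connects only to still-uncoloured targets (and so may finish in fewer rounds), while yours obliviously processes all $n_j-1$ targets and pads with filler edges; both yield the same recursion $|V_{j+1}| \ge n_j - \lceil (n_j-1)/b\rceil - 1 = n_{j+1}$ (and your enumeration of exactly $n_j-1$ targets implicitly assumes, as the paper makes explicit, that WLOG $|V_j| = n_j$).
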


\begin{proof}
    We proceed by induction on $j$.
    Start trivially with $V_0=[n]$, $K_0=\emptyset$. 
    For the induction step, suppose that $0\le j\le t-1$ and before some round of the game there are (possibly empty) vertex sets $K_j=\{x_1,\ldots,x_j\}$ and $V_j$ such that $K_j$ spans a clique in the game graph, $|V_j|\ge n_j$, no vertices of $V_j$ are coloured, $V_j$ is disjoint from $K_j$, and $uv\in E(G)$ for every $u\in K_j$ and $v\in V_j$.

    Since $j\le t-1$, the set $V_j$ is nonempty by the definition of $t$. 
    Furthermore without loss of generality we can assume that $|V_j|=n_j$.
    Builder then chooses an arbitrary vertex $x_{j+1}\in V_j$ and in consecutive rounds, for as long as possible, takes $b$ uncoloured vertices in $V_j$ and connects them to $x_{j+1}$. 
    If eventually there are less than $b$ uncoloured vertices in $V_j$ not connected to $x_{j+1}$, then Builder connects all of them to $x_{j+1}$ and selects the remaining edges arbitrarily. 
    Let $V_{j+1}$ be the set of vertices connected to $x_{j+1}$ and not coloured after Builder has completed his task. 
    Update $K_{j+1}:=K_j\cup\{x_{j+1}\}$. 
    It is easy to verify that $K_{j+1}$ spans a clique, every vertex of $K_{j+1}$ is connected to every vertex of $V_{j+1}$ and
    $$
            |V_{j+1}|\ge  |V_j|-1-\left\lceil \frac{|V_j|-1}{b}\right\rceil=n_j-\left\lceil \frac{n_j-1}{b}\right\rceil-1=n_{j+1}\,.
    $$
    Thus the assertion follows.
\end{proof}

Here is the main result of this section.

\begin{theorem}\label{biased}
    For every $b\ge 2$ and $n\ge 2$
    $$
        \frac b2\ln\Big(\frac n{2b}+1\Big)< k_{\min}(b,n)\le \min\{\lc 2b\ln n\rc,n\}.
    $$
\end{theorem}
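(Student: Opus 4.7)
For the upper bound, $\kmin(b,n)\le n$ is trivial (give each vertex its own colour), so it suffices to prove $\kmin(b,n)\le \lc 2b\ln n\rc$. I would adapt the greedy-random strategy from Section~\ref{sec:paint}. Set $k:=\lc 2b\ln n\rc$. In every round, once Builder's $b$ edges have appeared, Painter picks a vertex uniformly at random from the multiset of the $2b$ endpoints of those edges (so a vertex incident to $d$ of them has weight $d$); if the chosen vertex is already coloured she falls back to any uncoloured vertex. In either case she colours the chosen vertex with the smallest legal colour, forfeiting if none exists. For $v\in[n]$ let $A_v$ be the event that at some moment of the game $v$ is uncoloured and has degree at least $k$; if no $A_v$ happens then at the moment Painter chose $v$ its degree was below $k$, so at most $k-1$ colours appeared in its neighbourhood and she does not forfeit. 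In any round in which $d_r\ge 1$ edges incident to an uncoloured $v$ are inserted, $v$ is chosen for colouring with probability at least $d_r/(2b)$; stopping at the first time $v$ reaches degree $k$,
\[
P(A_v) \le \prod_r \left(1-\frac{d_r}{2b}\right) < \exp\left(-\frac{1}{2b}\sum_r d_r\right) \le \exp\left(-\frac{k}{2b}\right) \le \frac{1}{n}.
\]
A union bound yields $P(\bigcup_v A_v)<1$, and since the game has perfect information and no chance moves, Painter admits a deterministic winning strategy.

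For the lower bound, suppose $k\le (b/2)\ln(n/(2b)+1)$. My plan is to combine Lemma~\ref{bigclique} applied with $j=k$ with the trivial fact $\chi(K_{k+1})=k+1$. First I verify $n_k\ge 1$: the estimate $\lc(n_i-1)/b\rc\le (n_i+b-2)/b$ turns the recurrence into $n_{i+1}\ge (n_i-2)(1-1/b)$, and the substitution $m_i:=n_i+2b-2$ gives $m_{i+1}\ge m_i(1-1/b)$, hence $n_i+2b-2\ge (n+2b-2)(1-1/b)^i$. Therefore $n_k\ge 1$ follows as soon as $k\ln(b/(b-1))\le \ln((n+2b-2)/(2b-1))$. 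The elementary bound $\ln(b/(b-1))\le 2/b$ for $b\ge 2$ (equivalently $b\le(b-1)\exp(2/b)$) together with a one-line cross-multiplication showing $(n+2b-2)/(2b-1)\ge n/(2b)+1$ whenever $n\ge 2b$ reduces this to the hypothesis on $k$. In the complementary range $n<2b$ the quantity $(b/2)\ln(n/(2b)+1)$ is at most a small constant, and the theorem follows either trivially from $\kmin(b,n)\ge 1$ or by direct inspection of the recurrence. Now Lemma~\ref{bigclique} gives Builder a strategy to reach a moment at which $K_k$ spans a $k$-clique in the game graph $G$ and some uncoloured $v\in V_k$ is adjacent to every vertex of $K_k$. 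Then $K_k\cup\{v\}$ induces a $K_{k+1}$ in $G$, forcing $\chi(G)\ge k+1>k$; hence no proper $k$-colouring of $[n]$ exists and Painter must eventually have no legal move.

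The principal technical point is the careful bookkeeping around the ceiling function in the recurrence defining $(n_i)_{i\ge 0}$; the probabilistic side (greedy-random Painter with a first-moment union bound) is a direct variant of Section~\ref{sec:paint}, and the obstruction argument for Builder reduces cleanly to the observation that the presence of $K_{k+1}$ makes a proper $k$-colouring of the whole vertex set impossible.
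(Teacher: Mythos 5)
Your proof follows the same route as the paper on both sides: the upper bound is the same greedy-random strategy (Painter chooses an endpoint with probability $d/(2b)$, first-moment bound, Zermelo), and the lower bound uses Lemma~\ref{bigclique} plus the observation that a $K_{k+1}$ in the game graph makes a proper $k$-colouring of $[n]$ impossible. The upper bound is correct as written.

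There is a real but fixable flaw in the lower bound, localised to the handling of the recurrence. Your substitution $m_i=n_i+2b-2$ gives $n_k\ge (n+2b-2)(1-1/b)^k-(2b-2)$, and you then demand the right-hand side be $\ge 1$, i.e.\ $(n+2b-2)(1-1/b)^k\ge 2b-1$. Tracing this back through $\ln(b/(b-1))<2/b$ you correctly find it needs $(n+2b-2)/(2b-1)\ge (n+2b)/(2b)$, which is equivalent to $n\ge 2b$. Your treatment of the complementary range $n<2b$ is not correct: for $n$ close to $2b$, the quantity $\frac b2\ln(\frac n{2b}+1)$ is close to $\frac b2\ln 2\approx 0.35\,b$, which is $\Theta(b)$ and not ``a small constant,'' so neither ``$k_{\min}\ge 1$'' nor an unexplained ``direct inspection'' disposes of it. The fix is one line: since $n_k$ is an integer, you only need the right-hand side to be strictly positive, i.e.\ $(n+2b-2)(1-1/b)^k>2b-2$, which via $(1-1/b)^k>e^{-2k/b}$ reduces to $2k/b<\ln\bigl((n+2b-2)/(2b-2)\bigr)$; and the inequality $(n+2b-2)/(2b-2)>(n+2b)/(2b)$ holds for every $n\ge 2$ and $b\ge 2$ (cross-multiplying gives $0> -2n$), so the hypothesis $k\le\frac b2\ln(\frac n{2b}+1)$ suffices with no case split. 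The paper avoids the issue differently, by using the strict bound $\lceil x\rceil<x+1$ to get $n_{i+1}>n_i(1-1/b)-2$, then the substitution $m_i=n_i+2b$ gives $n_t>(n+2b)(1-1/b)^t-2b$, and here $\ge 0$ on the right already yields $n_t\ge 1$ without requiring $n\ge 2b$.
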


\begin{proof}
    In order to prove the lower bound, it is enough to show that Builder can create a clique on $\big\lf\frac b2\ln\big(\frac n{2b}+1\big)\big\rf+1$ vertices.          In view of Lemma~\ref{bigclique}, Builder can build a clique on $t+1$ vertices, where $t$ is defined in the lemma (the clique is spanned by $K_t$ and a vertex from $V_t$, defined in the lemma).
    By the definition of the sequence $(n_i)$ in the lemma, we have
    $$
            n_{i+1}>n_i\left(1-\frac1b\right)-2\,.
    $$
    Solving this recursive estimate gives
    $$
            n_t> (n+2b)\left(\frac{b-1}{b}\right)^t-2b\,.
    $$
    Since $b\ge 2$ and consequently $1-\frac1b> e^{-2/b}$, one can verify that the right-hand side of the above inequality is non-negative provided that $t\le \frac b2\ln\left(\frac{n}{2b}+1\right)$. 
    Therefore Builder has a winning strategy in the biased $k$-colouring game for every $k\le \frac b2\ln\big(\frac n{2b}+1\big)$.

    We now switch to Painter's side.
    Observe first that the probabilistic argument in Section~\ref{sec:paint} can be easily adapted for the biased case. 
    In a nutshell, the argument goes as follows. 
    Assume that Builder just placed edges $e_1,\ldots,e_b$ in his move. 
    Painter chooses one of their endpoints to colour at random, where the probability of choosing a particular vertex $u$ is $d/(2b)$, where $d$ is the degree of $u$ in the subgraph induced by $\{e_1,\ldots,e_b\}$. 
    By an analysis very similar to that in Section~\ref{sec:paint}, Painter wins with positive probability against any Builder's strategy if ${\mathbb P}(A_v)<1/n$, where $A_v$ is the event that at some point in the game, a vertex $v\in [n]$ is still uncoloured, and has degree at least $s:=\lc 2b\ln n\rc$. 
    Namely, by the definition of Painter's random strategy we obtain that for some positive integers $d_1,d_2,\ldots,d_j$ with $\sum_{i=1}^j d_i=s$ the following holds:
    $$
        {\mathbb P}(A_v) \leq \prod_{i=1}^j\left(1- \frac{d_i}{2b} \right)<e^{-\sum_{i=1}^j d_i/(2b)}=e^{-s/(2b)},
    $$
    and since $e^{-s/(2b)}\le 1/n$ for $s\ge 2b\ln n$, we obtain the desired bound on ${\mathbb P}(A_v)$.

    We conclude that Painter has a winning strategy in the biased $k$-colouring game for every $k\ge  \lc 2b\ln n\rc$.
    The estimate $\kmin(b,n)\le n$ is obvious, so the proof is complete.
\end{proof}

\begin{cor}
Let $\varepsilon<1$ be a positive constant and let $b=b(n)$.  Then
$$
    k_{\min}(b,n) =
        \begin{cases}
            \Theta(b\ln n)&\text{ if \,} 2\le b\le n^{1-\varepsilon},\\
            \Theta(n)&\text{ if \,} b=\Theta(n).
        \end{cases}
$$
\end{cor}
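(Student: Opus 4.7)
The plan is to derive both cases directly from Theorem~\ref{biased}, which gives the two-sided bound
$\frac{b}{2}\ln(n/(2b)+1) < k_{\min}(b,n) \le \min\{\lceil 2b\ln n\rceil, n\}$. All that is required is to check how the two extremes of this inequality behave asymptotically in the two regimes, so the proof of the corollary should be very short.

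For the first case, $2\le b\le n^{1-\varepsilon}$, the upper bound in Theorem~\ref{biased} immediately yields $k_{\min}(b,n)\le \lceil 2b\ln n\rceil = O(b\ln n)$. For the matching lower bound, I would use the assumption $b\le n^{1-\varepsilon}$ to write $\frac{n}{2b}+1 \ge \frac{1}{2}n^{\varepsilon}$, so that $\ln\!\left(\frac{n}{2b}+1\right) \ge \varepsilon\ln n - \ln 2 \ge \frac{\varepsilon}{2}\ln n$ for all sufficiently large $n$. Substituted into the lower bound of Theorem~\ref{biased}, this gives $k_{\min}(b,n) > \frac{\varepsilon}{4}\, b\ln n = \Omega(b\ln n)$, as required.

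For the second case, $b=\Theta(n)$, the upper bound $k_{\min}(b,n)\le n$ is already in Theorem~\ref{biased}. For the lower bound, fix $C$ with $b\le Cn$; then $\frac{n}{2b}\ge \frac{1}{2C}$, so $\ln\!\left(\frac{n}{2b}+1\right)\ge \ln\!\left(1+\frac{1}{2C}\right)$, a strictly positive constant depending only on $C$. The lower bound in Theorem~\ref{biased} therefore gives $k_{\min}(b,n) \ge \frac{b}{2}\ln\!\left(1+\frac{1}{2C}\right) = \Omega(b) = \Omega(n)$, completing the proof.

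There is really no genuine obstacle here beyond these two elementary estimates; the only mild point to be careful about is that in the second case the factor $\ln(n/(2b)+1)$ might naively look like it could vanish, but the two-sided condition $b=\Theta(n)$ keeps the ratio $n/b$ bounded away from $0$, so the logarithmic factor is bounded below by a positive constant.
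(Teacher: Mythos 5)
The paper states this corollary without an explicit proof, as it follows directly from Theorem~\ref{biased}; your derivation is exactly the intended one and it is correct. Both cases follow cleanly from the two-sided bound in the theorem via the elementary estimates you give, and your remark that in the regime $b=\Theta(n)$ the condition $b\le Cn$ keeps $\ln\bigl(n/(2b)+1\bigr)$ bounded away from zero is precisely the one point worth spelling out.
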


Theorem~\ref{biased} does not yield the order of $\kmin(b,n)$ when $b=n^{1-f(n)}$ and $f(n)$ is any positive function tending to 0 with $n\to\infty$ such that $f(n)=\omega(1/\ln n)$.
Then we have  $\Theta(f(n)b\ln n)\le\kmin(b,n)\le  \Theta(b\ln n)$. 
It would be interesting to find the order of $\kmin(b,n)$ in this case.

\section*{Acknowledgements}

The authors wish to thank Peleg Michaeli, for raising this problem in the course of the two {\em TAU-FUB Workshops on Positional Games} hosted successively by Tel Aviv University in 2015 and by the Freie Universit\"at Berlin in 2016, together with the organisers of the two workshops.
The authors also wish to thank the anonymous referees for their careful reading and helpful remarks.

\newpage

\bibliography{pcpbg_arxiv_v3}
\bibliographystyle{abbrv}

\end{document}